\documentclass{article}

\usepackage{amssymb}

\usepackage{amsmath}
\usepackage[margin=1in]{geometry}

\usepackage{amsthm}
\newtheorem{remark}{Remark}%
\newtheorem{assumption}{Assumption}
\newtheorem{definition}{Definition}%
\newtheorem{problem}{Problem}
\newtheorem{lemma}{Lemma}
\newtheorem{theorem}{Theorem}

\usepackage{bbold}
\usepackage{xcolor}
\newcommand{\fatone}{\mathbb{1}}

\newcommand{\cov}{\Xi}

\usepackage[ruled,vlined,noend]{algorithm2e}

\usepackage{mathtools}
\mathtoolsset{showonlyrefs=true}

\usepackage{subcaption}
\usepackage{tikz}
\usepackage{pgfplots}
\pgfplotsset{compat=1.18}
\newlength\figH
\newlength\figW
\definecolor{pastel1}{RGB}{255,182,193}  
\definecolor{pastel2}{RGB}{173,216,230}  
\definecolor{pastel3}{RGB}{144,238,144}  
\definecolor{pastel4}{RGB}{255,218,185}  
\definecolor{pastel5}{RGB}{221,160,221}  
\definecolor{pastel6}{RGB}{255,255,224}  
\definecolor{pastel7}{RGB}{176,224,230}  
\definecolor{pastel8}{RGB}{250,128,114}  
\definecolor{pastel9}{RGB}{152,251,152}  
\definecolor{pastel10}{RGB}{255,160,122} %

\usepackage{hyperref}
\hypersetup{hidelinks,breaklinks=true}
\usepackage{url}                 
\usepackage{xurl}
\urlstyle{same}                  

\begin{document}

\title{Differentially Private Formation Control: Privacy and Network Co-Design\thanks{This work was supported by AFOSR under grant FA9550-19-1-0169, by ONR under grant N00014-21-1-2502, and by NSF CAREER grant 1943275. The authors are with the School of Electrical and Computer Engineering, Georgia Institute of Technology, Atlanta, GA, USA.}}

\author{Calvin Hawkins\thanks{Corresponding author. Emails: \{chawkins64,mhale30\}@gatech.edu} \and Matthew Hale}

\date{}

\maketitle

\begin{abstract}
Privacy in multi-agent control is receiving increased attention, though
often a networked system and privacy protections are designed separately, which
can harm performance. 
Therefore, this paper presents a co-design framework for networks and private controllers,
and we apply it to private formation control. Agents' state trajectories are protected using differential privacy, 
and we quantify its impact 
by bounding the steady-state error for private formations. Then, we analyze tradeoffs between privacy level, system performance, and connectedness of the network's communication topology. These tradeoffs are used to formulate a co-design optimization framework to
jointly design agents' communication topology and their privacy levels. Simulation results illustrate the success of this framework.

\noindent\textbf{Keywords:} Network analysis and control, Privacy, Optimization
\end{abstract}

\section{Introduction}\label{sec:intro}
Multi-agent systems require agents to share information to collaborate, and
in some cases this information is sensitive.
For example, self-driving cars may share location data
to be routed to a destination. Location data and other
streams can be revealing and thus 
require protections, though this data must still be useful
for multi-agent coordination. 

This type of privacy has recently been achieved using differential privacy~\cite{dwork2006calibrating}, which has been applied
to various control, estimation, and optimization problems~\cite{geirEnt,yazdani2018differentially,kawano2020design,krishnan2020probabilistic,han2016differentially,huang2015differentially,wang2016differentially}. 
Broadly speaking, differential privacy carefully introduces randomness in a system, e.g., by adding noise to conceal sensitive information.
Differential privacy is appealing because it is immune to post-processing and robust to side information \cite{dwork2014algorithmic}. These properties mean that privacy guarantees are not compromised by performing operations on differentially private data, and that they are not weakened by much by an adversary with additional information about data-producing 
agents~\cite{kasiviswanathan2014semantics}. 

In this work we focus on distributed formation control, e.g., 
robots physically assembling into geometric shapes or non-physical agents maintaining relative state offsets.
We develop a private decentralized formation control protocol in which each agent adds noise to keep its state trajectory private from other agents.
We provide 
privacy guarantees over an agent's entire 
state trajectory, which ensures that an adversary is unlikely to learn much about the state trajectory of a system.
Privacy noise of course impacts agents' formations,
and we provide a closed-form for the steady-state error covariance
that privacy induces in formations. We also bound the steady-state
error itself, and both bounds are in terms of
agents' privacy parameters.

{Privacy is often only a post-hoc concern that is incorporated
after a network and/or a controller is designed, which can harm performance; however, there also exist approaches that achieve privacy without sacrificing convergence~\cite{chen2025differentially}.}
Therefore, we leverage our steady-state analyses
to formulate optimization problems that 
design a network topology and a differential privacy
implementation together.

While other works have implemented other forms of privacy for trajectories
in consensus problems, e.g.,~\cite{zhang2024average},
to the best of our knowledge, we are the first to implement trajectory-level \emph{differential} privacy 
in the setting of formation control. 
The formation control protocol does resemble the average
consensus protocol, and existing work has applied differential privacy to
initial states in consensus, e.g.,~\cite{mitraC,nozari2017differentially,ruan19} and others. This paper differs 
by implementing trajectory-level differential
privacy~\cite{le2013differentially}, which
conceals deviations  from a nominal state trajectory, which
can occur at any time and can occur across multiple timesteps, not just the initial time. 
There also exists work that considers differential privacy and \emph{co-design} in a control setting~\cite{gao2024private}, which considers initial state privacy and uses co-design to tune privacy noise and controller parameters, not the network topology.
There also exists work on the co-design of control and communication, without addressing privacy~\cite{lu2023control}.

This difference in privacy is
necessary because we address a fundamentally
different type of privacy need than initial-state privacy. 
To highlight the difference, suppose that a coalition of drones is assembling
a formation, and, in the course of doing so, one drone takes a detour to photograph
a location of interest, which it would like to conceal. 
This detour occurs after the initial time, and it cannot be concealed by privatizing its initial state.
However, such a detour can
be concealed by implementing privacy at the trajectory level, 
which is what we do.

{Beyond differential privacy, other privacy-preserving approaches have been applied to networked control and estimation.
For example, cryptographic approaches such as homomorphic encryption enable computation over a secure, shared channel 
at the cost of increased computational overhead~\cite{tan2023cooperative}.
Estimation-theoretic approaches based on information bounds have also been used to limit information disclosure by constraining inference accuracy~\cite{guo2025state}.
In this work, we focus on differential privacy because it provides a formal, worst-case notion of privacy that is independent of adversary models and is well suited to decentralized control architectures.}

{
While the privacy mechanism and 
some optimization formalisms employed in this work are standard, the problem formulation and analysis that connect privacy, network topology, and formation control performance are not. Our contributions therefore lie
in making these connections analytically and developing 
a computational framework to apply them in practice. 
}

{
In detail, the main contributions of this paper are as follows:
    \begin{itemize}
        \item An analytical characterization of the steady-state formation error under trajectory-level differential privacy as a function of the network topology
        \item A co-design formulation that jointly selects agent-level privacy parameters and communication weights under steady-state performance constraints
        \item A tractable solution approach based on a biconvex relaxation and alternate convex search which enables practical computation of co-designed networks.
    \end{itemize}
    }

{The main analytical challenge in this work is to bound the steady-state formation error under trajectory-level differential privacy in a manner that explicitly captures the joint dependence on agent-level privacy parameters and the communication topology. This dependence is nonlinear and coupled, and therefore difficult to optimize directly.}
{To overcome this difficulty we derive an explicit upper bound on steady-state error that isolates the dependence on privacy parameters and algebraic connectivity. The resulting bound is structured so that it can be embedded directly into a co-design optimization problem, enabling principled trade-offs between privacy strength, network connectivity, and formation performance.}
    
A preliminary version of this work appeared in \cite{hawkins2020differentially}. 
This paper's new contributions are the co-design framework, the steady-state error analysis, new simulations, and proofs of all
results. 
The rest of the paper is organized as follows. Section~\ref{sec:background} gives background, and Section~\ref{sec:probform} provides problem statements. 
Then Section~\ref{sec:closedform} analyzes private formation control, and 
Section~\ref{sec:codesign} develops the co-design framework. 
Section~\ref{sec:sims} provides simulations, and Section~\ref{sec:conclusions} concludes.

\emph{Notation:} 
$\mathbb{R}$ denotes the reals
and~$\mathbb{N}$ denotes the naturals. 
For~$\ell \in \mathbb{N}$, let~$[\ell] := \{1, \ldots, \ell\}$. 
We use
$I_a\in\mathbb{R}^{a\times a}$ for the identity matrix in $a$ dimensions and $\fatone$ 
for the vector of all ones in $\mathbb{R}^N$. 
We use the norm~$\|v\|_{\infty} = \max_{i \in [N]} |v_i|$.

\section{Background and Preliminaries}\label{sec:background}
Here we review graph theory and differential privacy.

\subsection{Graph Theory Background}
A graph $\mathcal{G} = (V,E)$ is defined over a set of~$N$ nodes 
indexed over $V = \{1, \ldots, N\}$ and a set of edges $E \subseteq V \times V$. The pair $(i,j) \in E$ if nodes $i$ and $j$ share a connection and $(i,j) \notin E$ if they do not. This paper considers undirected, weighted, simple graphs. Undirectedness means that an edge $(i,j) \in E$ is not distinguished from $(j,i) \in E$. Simplicity means that $(i,i) \notin E$ for all $i \in V$. Weightedness means that the edge $(i,j) \in E$ has a weight $w_{ij} = w_{ji} >0$. 

\begin{definition}[Connected Graph]
    A graph $\mathcal{G}$ is connected if, for all $i,j \in \{1,...,N\}$, $i \neq j$, there is a sequence of edges one can traverse from node $i$ to node $j$.
\end{definition}

We assume the following in the rest of the paper. 

\begin{assumption} \label{as:graph}

Agents communicate over a connected, weighted, simple, undirected
graph~$\mathcal{G} = (V, E)$ on~$N$ nodes. 

\end{assumption}

This paper uses the weighted graph Laplacian, which is defined with weighted adjacency and degree matrices. The weighted adjacency matrix $A(\mathcal{G}) \in \mathbb{R}^{N \times N}$ of $\mathcal{G}$ is given by 
\begin{equation*}
    A(\mathcal{G})_{ij} =
    \begin{cases}
    w_{ij} & (i,j) \in E \\
    0 & \text{otherwise}
    \end{cases}.
\end{equation*} 
For undirected~$\mathcal{G}$, $A(\mathcal{G})$ is symmetric.
The degree matrix is~$D(\mathcal{G}) = \textnormal{diag}(d_1,...,d_N)$, and 
the Laplacian is $L(\mathcal{G}) = D(\mathcal{G}) - A(\mathcal{G})$.
Agent $i$'s neighborhood set $N_i$ is the set of all agents 
that agent~$i$ communicates with, i.e., $N_i = \{j \mid (i,j) \in E \}$.
{The weighted degree of 
node $i$ is ${d_i = \sum_{j \in N_i } w_{ij}}.$}

Let $\lambda_k(\cdot)$ be the $k^{th}$ smallest eigenvalue of a symmetric matrix. Then for all graphs $ 0 = \lambda_1(L(\mathcal{G})) \leq \lambda_2(L(\mathcal{G})) \leq \dots \leq \lambda_N(L(\mathcal{G}))$.

\begin{definition}[Algebraic Connectivity \cite{fiedler1973algebraic}]
    The \emph{algebraic connectivity} of a graph $\mathcal{G}$ is the second smallest eigenvalue of~$L(\mathcal{G}),$ and $\mathcal{G}$ is connected if and only if ${\lambda_2(L(\mathcal{G})) > 0}$.
\end{definition}

\subsection{Differential Privacy Background}
This section briefly reviews differential privacy. 
More detail can be found in \cite{le2013differentially,cynthia2006differential}. The goal of differential privacy is to make similar pieces of data appear approximately indistinguishable, and it is immune to post-processing \cite{cynthia2006differential}. 
That is, arbitrary post-hoc computations on private data do not harm differential privacy,
including filtering~\cite{le2013differentially, 9147779}. 

We apply differential privacy to agents' state trajectories, and
we consider vector-valued trajectories of the form
${Z = (Z(1),Z(2),...,Z(k),...),}$
where $Z(k) \in \mathbb{R}^d$ for all $k$. 
The $\ell_2$ norm of $Z$ is defined as $\| Z \|_{\ell_2} = \left(\sum_{k=1}^{\infty} \|Z(k)\|^2_2 \right)^{\frac{1}{2}}$, where $\|\cdot\|_2$ is the ordinary $2$-norm on $\mathbb{R}^d$. 
We privatize trajectories in
$\ell_{\infty}^d = \{ Z \mid \|Z(k)\|_{\infty} < \infty \text{ for all } k\}$, namely the set of vector-valued trajectories whose entries are all finite.
The set~$\ell_{\infty}^d$ contains trajectories that
do not converge, which admits a wide variety of trajectories
seen in control systems.

We consider a network of $N$ agents, where agent $i$'s state trajectory is denoted by $x_i$. Its $k^{th}$ element is denoted $x_i(k) \in \mathbb{R}^d$ for $d \in \mathbb{N}$, and agent $i$'s state trajectory 
belongs to $\ell_{\infty}^{d}$. 
In this work, agents privatize their state trajectories before they are shared, which contrasts with works that privatize collections of trajectories. 
The approach we take is called 
\emph{input perturbation}~\cite{le2013differentially}, and 
it amounts to privatizing data, then using it in some computation. 
This is in place of \emph{output perturbation}, in which one
performs some computation with sensitive data and then privatizes its output. 
In principle, one may envision using output perturbation to privatize
agents' state trajectories. However, the use of output perturbation
requires aggregating agents' states in one place, and such
an aggregator is typically absent in formation control.

Indeed, if an aggregator were available, then all agents
could send their initial states to that aggregator,
it could execute the formation control protocol in a centralized
way, and then the aggregator could 
send each agent their final state to travel to.
The decentralized formation control protocol is thus only
meaningful in the absence of an aggregator, and we therefore
choose a privacy implementation that fits with the lack
of an aggregator, which is input perturbation.

This setup has the advantages of allowing each agent to calibrate
its own privacy parameters, and it also ensures that agents'
information is privatized before it is ever shared. A known challenge
can be that more noise is often required for input perturbation privacy than in alternative
setups.

The goal of differential privacy is to make ``similar'' pieces of data approximately indistinguishable, and an adjacency relation quantifies when pieces of data are ``similar.'' 
For input perturbation, adjacency is defined for single agents. 
 
\begin{definition}[Adjacency]\label{dfn:adj}
Fix an adjacency parameter $b_i > 0$ for agent $i$. 
Then~$\textnormal{Adj}_{b_i}: 
\ell_{\infty}^d \times \ell_{\infty}^d \xrightarrow{} \{0,1\}$ is defined as
\begin{equation*}
    \textnormal{Adj}_{b_i}(v_i,r_i) = \begin{cases}
    1 & \|v_i - r_i\|_{\ell_2} \leq b_i
    \\ 0 & \textnormal{otherwise.}
    \end{cases}
\end{equation*}
\end{definition}
In words, two state trajectories that agent $i$ could produce are adjacent if and only if the $\ell_2$-norm of their difference is upper bounded by $b_i$. This means that every state trajectory within  $\ell_2$ distance $b_i$ from agent $i$'s actual state trajectory must be made approximately indistinguishable from it. 
For example, agent~$i$ may take a detour of size~$b_i$
from the nominal
trajectory that it would follow to get into formation. 
This detour can occur at any time, and the 
adjacency relation in Definition~3
requires that our privacy implementation conceal it, regardless of when it occurs.
Existing approaches to initial-state privacy do not provide
these protections.

While adjacency requires two state trajectories
to have a finite difference between them, it does not
require the two trajectories in question to converge. 
This is reflected in the fact that an agent's trajectories
are contained in the space~$\ell_{\infty}^d$, which
indicates that trajectories consist of real-valued
vectors in~$\mathbb{R}^d$ that can take any finite values, and, 
in particular, they need not ever converge to any value.

Privacy is implemented by a ``mechanism,'' which is a randomized map that
we define next. 
First, fix a probability space $(\Omega, \mathcal{F},\mathbb{P})$. We consider outputs in 
$\ell_{\infty}^{d}$ and use a $\sigma$-algebra over $\ell_{\infty}^{d}$, denoted $\mathcal{F}_{\infty}^{d}$ \cite{hajek2015random}.

\begin{definition}[Differential Privacy]
    \label{dfn:dp}
Let $\epsilon_i > 0$ and $\delta_i \in (0,\frac{1}{2})$ be given.
Fix $b_i>0.$
A mechanism $M: \ell_{\infty}^{d} \times \Omega \xrightarrow{}\ell_{\infty}^{d}$ is $(\epsilon_i,\delta_i)$-differentially private if, for all adjacent $x_i,x_i^\prime \in \ell_{\infty}^{d}$, we have 
    \begin{equation*}
        \mathbb{P}[M(x_i) \in S] \leq e^{\epsilon_i}\mathbb{P}[M(x_i^\prime) \in S] + \delta_i
    \end{equation*}
for all $S \in \mathcal{F}_{\infty}^{d}.$
\end{definition}

{
Definition~\ref{dfn:dp} is applied individually to each agent $i\in[N],$ i.e., each agent's released trajectory satisfies $(\epsilon_i,\delta_i)-$differential privacy with respect to Definition~\ref{dfn:adj}.}
To calibrate privacy, agent $i$ selects privacy parameters $\epsilon_i$ and $\delta_i$. 
Typically, $\epsilon_i \in [0.1, \ln{3}]$ and $\delta_i \in [0, 0.05]$ for all $i$ \cite{le2013differentially,hsu2014differential,kasiviswanathan2014semantics}.
{The value of $\delta_i$ can be regarded as allowing a small probability that the~$\epsilon_i$-differential privacy guarantee for agent $i$ is violated, while $\epsilon_i$ controls the strength of privacy for agent $i$~\cite[Section 2.3]{dwork2014algorithmic}.}

In this work, we use the Gaussian mechanism, which adds
zero-mean i.i.d. noise drawn from a Gaussian distribution. 
We define it using the $Q$-function, i.e., $Q(y) = \frac{1}{\sqrt{2\pi}} \int_y^{\infty} e^{-\frac{z^2}{2}}dz.$
\begin{lemma}[Gaussian Mechanism \cite{le2013differentially}]\label{lem:gausmech}
     Let~$b_i > 0$, $\epsilon_i > 0$, and $\delta_i \in (0,\frac{1}{2})$ be given, and let $x_i \in \ell_{\infty}^{d}$. 
     Then the Gaussian mechanism takes the form ${\tilde{x}_i(k) = x_i(k) + v_i(k)}$, where 
    $v_i$ is a stochastic process with $v_i(k) \sim \mathcal{N}(0, \cov_{v_i})$, where $\cov_{v_i}=\sigma^2_i I_{d}$ with
    $\sigma_i \geq \frac{b_i}{2\epsilon_i}(K_{\delta_i} + \sqrt{K_{\delta_i}^{2} + 2\epsilon_i})$ and~${K_{\delta_i} =  Q^{-1}(\delta_i).}$
    This mechanism provides ($\epsilon_i$,$\delta_i$)-differential privacy to $x_i$.
\end{lemma}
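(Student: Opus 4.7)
The plan is to reduce the privacy guarantee to a tail bound on the log-likelihood ratio (the privacy loss) induced by the Gaussian mechanism, and then calibrate $\sigma_i$ so that this tail probability is no more than $\delta_i$.

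First, I would fix two adjacent inputs $x_i, x_i' \in \tilde{\ell}_2^d$ and let $\Delta = x_i - x_i'$. Adjacency guarantees $\|\Delta\|_{\ell_2} \leq b_i$, which in particular places $\Delta$ in $\ell_2^d$, so the laws of $M(x_i)$ and $M(x_i')$ are mutually absolutely continuous and the Radon--Nikodym derivative is well defined on the product $\sigma$-algebra $\Sigma_2^d$. For the Gaussian mechanism with covariance $\sigma_i^2 I_d$ added independently at each time index, a direct expansion of the log-density ratio evaluated at $\tilde{x}_i = x_i + v_i$ gives
\begin{equation*}
L \,=\, \frac{1}{2\sigma_i^2}\bigl(2\langle v_i,\Delta\rangle_{\ell_2} + \|\Delta\|_{\ell_2}^2\bigr),
\end{equation*}
so only the projection of $v_i$ onto the one-dimensional subspace spanned by $\Delta$ actually matters.

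Second, $L$ is therefore a scalar Gaussian with mean $\|\Delta\|_{\ell_2}^2/(2\sigma_i^2)$ and variance $\|\Delta\|_{\ell_2}^2/\sigma_i^2$. By the standard privacy-loss characterization of $(\epsilon,\delta)$-differential privacy \cite{dwork2014algorithmic}, it suffices to verify $\mathbb{P}[L > \epsilon_i] \leq \delta_i$ uniformly over adjacent pairs. Standardizing the Gaussian and using the definition of the $Q$-function yields
\begin{equation*}
\mathbb{P}[L > \epsilon_i] \,=\, Q\!\left(\frac{\epsilon_i\sigma_i}{\|\Delta\|_{\ell_2}} - \frac{\|\Delta\|_{\ell_2}}{2\sigma_i}\right).
\end{equation*}
The argument of $Q$ is monotonically decreasing in $\|\Delta\|_{\ell_2}$ on $(0,b_i]$, so the tightest constraint arises at $\|\Delta\|_{\ell_2} = b_i$. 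Requiring that argument to be at least $K_{\delta_i} = Q^{-1}(\delta_i)$ produces the quadratic inequality $2\epsilon_i\sigma_i^2 - 2K_{\delta_i}b_i\sigma_i - b_i^2 \geq 0$ in $\sigma_i$, whose larger root is exactly the stated lower bound.

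The main obstacle is the infinite-dimensional change of measure, i.e., making sense of $L$ as a single random variable over all of $\tilde{\ell}_2^d$ rather than only on truncated time windows. This is resolved by noting that the Gaussian noise is independent across time indices, so the Radon--Nikodym derivative factorizes pointwise in time, and the square-summability of $\{\|\Delta(k)\|_2\}_k$ forced by adjacency guarantees that the infinite product converges almost surely to the expression above. Once this justification is in place, the remainder of the argument is the direct Gaussian tail calculation.
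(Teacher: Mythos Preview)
Your argument is correct and is precisely the standard derivation of the Gaussian mechanism's $(\epsilon,\delta)$ guarantee: reduce to the privacy loss random variable, observe it is a one-dimensional Gaussian whose parameters depend only on $\|\Delta\|_{\ell_2}$, apply the sufficient condition $\mathbb{P}[L>\epsilon_i]\leq\delta_i$, and solve the resulting quadratic in $\sigma_i$. The monotonicity check, the handling of the worst case at $\|\Delta\|_{\ell_2}=b_i$, and the infinite-product justification via square-summability of $\{\|\Delta(k)\|_2\}_k$ are all sound.

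Note, however, that the paper itself does not prove this lemma at all: it is stated as a quoted result from \cite{le2013differentially} and closed with a $\blacksquare$ immediately after the statement. So there is no ``paper's own proof'' to compare against; what you have written is essentially the argument given in the cited reference (and in the broader differential-privacy literature, e.g.\ \cite{dwork2014algorithmic}). In that sense your proposal is not an alternative route but a self-contained reconstruction of the deferred proof, which is entirely appropriate.
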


\section{Problem Formulation} \label{sec:probform}

\subsection{Problem Statement}
\begin{problem} \label{prob:main}
Let Assumption~\ref{as:graph} hold. 
Let $x_i(k) \in \mathbb{R}^d$ be agent $i$'s state at time $k$, $N_i$ be agent $i$'s neighborhood set, and $\gamma > 0$ be a stepsize. 
Let~$n_i(k)$ denote the process noise in agent~$i$'s state dynamics at time~$k$. 
We define $\Delta_{ij} \in \mathbb{R}^d$ for all $ (i,j) \in E$ as the desired relative state offset between agents $i$ and $j$. Do each of the following: 
\begin{enumerate}[i.]
    \item Implement the formation control protocol
\begin{equation} \label{nodelevel_no_noise}
    x_i(k+1) = x_i(k) + \gamma \sum_{j \in N_i}w_{ij}(x_j(k) - x_i(k) - \Delta_{ij})+n_i(k),
\end{equation}
in a differentially private, decentralized manner. \label{prob1}
\item Bound the performance of the network in terms of privacy parameters and network properties. \label{prob2}
\item Use those bounds to co-design the communication topology and privacy parameters of the network. \label{prob3}
\end{enumerate}
\end{problem}

Next, we define agent- and network-level dynamics and detail how each agent will enforce differential privacy. 

\subsection{Multi-Agent Formation Control}

The goal of formation control is for agents to reach
a set of relative states. Multi-agent formation
control is a well-researched problem and there are several mathematical
formulations one can use to achieve similar results  \cite{jadbabaie2015scaling,krick2009stabilisation,ren2007information,ren2007consensus,fax2004information,olfati2007consensus,mesbahi2010graph}.
We will define offsets between agents that communicate
and the objective is for all agents to maintain these relative
offsets.

\begin{assumption}[Section~6.1.1 of~\cite{mesbahi2010graph}] \label{as:delta}
The formation specified by~$\{\Delta_{ij}\}_{(i,j) \in E}$ is feasible,
i.e., there exists a set of points that 
simultaneously satisfies all pairwise distances.
We require $\Delta_{ij}=-\Delta_{ji}$
for all $(i,j)\in E.$
\end{assumption}

The objective is having ${\lim\limits_{k\xrightarrow{}\infty}(x_{j}(k)-x_{i}(k))=\Delta_{ij}}$
for all $(i,j)\in E.$ The formation can be centered on any point in
$\mathbb{R}^{d}$, i.e., we allow
formations to be translationally invariant \cite{mesbahi2010graph}.

\begin{remark}
For all~$i \in [N]$, agent~$i$ will execute a privatized
form of~\eqref{nodelevel_no_noise} to be detailed below. 
To ease discussion, we will consider a set of 
points~$\mathcal{P} = \{p_1, \ldots, p_N\}$
below that satisfy~$p_j - p_i = \Delta_{ij}$ for all~$(i, j) \in E$. 
We emphasize here that agents do not know (and do not need to know)
the point~$p_i$ for any~$i \in [N]$. Instead, our use of~$\mathcal{P}$
is only for ease of discussion and analysis. Algorithm~\ref{alg:forms} below
shows the steps that each agent will actually implement,
and none of them require~$p_i$. 
\end{remark}

As in~\eqref{nodelevel_no_noise}, 
we model agents as discrete-time single integrators, i.e., $x_i(k+1)=x_i(k)+u_i(k)+n_i(k),$
where $n_i(k)\sim\mathcal{N}(0,s_i^2I_d)$ is process noise and $u_i(k)\in\mathbb{R}^d$ is agent $i$'s input. 
Let the vector $p=(p_{1}^{T},\dots,p_{N}^{T})^{T}\in\mathbb{R}^{Nd}$ 
be a network-level state that satisfies the formation specification, 
let $x(k)=(x_{1}(k)^{T},...,x_{N}(k)^{T})^{T}\in\mathbb{R}^{Nd}$,
and let $n(k)=(n_{1}(k)^{T},...,n_{N}(k)^{T})^{T}\in\mathbb{R}^{Nd}$.
Set~$\bar{x}(k) = x(k) - p$. Then analyzing~\eqref{nodelevel_no_noise}
at the network-level is equivalent to analyzing
\begin{equation} \label{networklevel_nonoise}
\bar{x}(k+1)=\Big((I_N-\gamma L\big(\mathcal{G})\big)\otimes I_{d}\Big)\bar{x}(k) +n(k), 
\end{equation}
where~$\otimes$ is the Kronecker product; we reiterate
that~\eqref{networklevel_nonoise} is used only for analysis, and
agents do not need to know~$p_i$ for any~$i$.

Let $\bar{x}_{i_{[l]}}$ be the $l^{th}$ scalar element of $\bar{x}_{i}$ for~$l \in [d]$. 
Then $\bar{x}_{[l]} = [\bar{x}_{1_{[l]}},\dots,\bar{x}_{N_{[l]}}]^{T}\label{statediml}$
is the vector of all agents' states in the $l^{th}$ dimension, and 
$\bar{n}_{[l]} = [n_{1_{[l]}},\dots,n_{N_{[l]}}]^{T}$
is the vector of corresponding noise terms. 
Analyzing~\eqref{networklevel_nonoise} is therefore equivalent to analyzing the protocol 
\begin{equation}
\bar{x}_{[l]}(k+1)=(I_N-\gamma L(\mathcal{G}))\bar{x}_{[l]}(k)+n_{[l]}(k)\label{dimJproto}
\end{equation}
for all $l \in [d]$ simultaneously.

\subsection{Private Communications (Solution to Problem~\ref{prob:main}.\ref{prob1})}
To privately implement the protocol in~\eqref{nodelevel_no_noise}, 
agent $j$ starts by selecting privacy parameters $\epsilon_{j}>0$,
$\delta_{j}\in(0,\frac{1}{2})$, and adjacency relation $\text{Adj}_{b_{j}}$
with $b_{j}>0$. Then, agent $j$ privatizes its state
trajectory $x_{j}$ with the Gaussian mechanism. Let $\tilde{x}_{j}$
denote the differentially private version of $x_{j}$, where $\tilde{x}_{j}(k)=x_{j}(k)+v_{j}(k),$ with $v_{j}(k)\sim\mathcal{N}(0,\cov_{v_{j}}).$
Here, $\cov_{v_{j}}=\sigma_{j}^{2}I_{d}$ and 
$\sigma_{j} \geq \frac{b_j}{2\epsilon_j}\big(K_{\delta_j} + \sqrt{K_{\delta_j}^{2} + 2\epsilon_j}\big)$.
Lemma~\ref{lem:gausmech} shows that this setup keeps agent~$j$'s state
trajectory~$(\epsilon_j, \delta_j)$-differentially private. 
Agent $j$ then shares $\tilde{x}_{j}(k)$ with its neighbors.

Thus, agent $i$
only has access to $\tilde{x}_{j}(k)$ for $j\in N_i$. Plugging
this into the node-level formation control protocol in \eqref{nodelevel_no_noise}
gives
\begin{equation}
\tilde{x}_{i}(k+1)=\tilde{x}_{i}(k)+\gamma\sum_{j\in N_i}w_{ij}\big(\tilde{x}_{j}(k)-\tilde{x}_{i}(k) - \Delta_{ij}\big)+n_i(k).\label{DP_node_level}
\end{equation}
It is well known that~\eqref{dimJproto} 
and~\eqref{DP_node_level} are mathematically
equivalent (see, e.g.,~\cite{mesbahi2010graph}). 
We analyze~\eqref{dimJproto} because it is simpler,
and all of these analyses apply verbatim to~\eqref{DP_node_level}. 

We present
the full private formation controller in
Algorithm~\ref{alg:forms}, which solves
Problem~\ref{prob:main}.\ref{prob1}.

\begin{algorithm}
\caption{Private formation controller (Solution to Problem~\ref{prob:main}.\ref{prob1})}
    \label{alg:forms}
    \SetAlgoLined
\textbf{Inputs}: $\epsilon_i > 0$, $\delta_i \in (0, 1/2)$, $b_i > 0$ for all $i \in [N]$, $\Delta_{ij}$ for all~$(i, j) \in E$ \\
    \textbf{Outputs}: Privately assembled formation \\
    \For{$k = 1, 2, \ldots$}{
    \For{$i \in [N]$}{
    Agent~$i$ sends $\tilde{x}_i(k)$ to agents~$j \in N_i$ \\
    Agent~$i$ receives~$\tilde{x}_j(k)$ for all~$j \in N_i$ \\
    Agent~$i$ executes the state update in~\eqref{DP_node_level}
    }
    }
\end{algorithm}
{We note that the Gaussian input perturbation mechanism employed here is standard. The novelty of this work lies not in developing a new mechanism but instead
in the system-level analysis of privacy's impact on multi-agent control performance,
as well as the incorporation of this analysis into a 
computational framework for privacy/network co-design, which we develop across
this section and Sections~4 and~5.}
To solve Problems~\ref{prob:main}.\ref{prob2} and~\ref{prob:main}.\ref{prob3}, 
we will analyze this protocol
at the network level.

\begin{lemma} \label{lem:ntwrk_dyn}
Consider Algorithm~\ref{alg:forms} and
let Assumption~\ref{as:graph} hold. 
Then 
the network-level dynamics for each $l \in\{1,\dots,d\}$ are
\begin{equation}
\bar{x}_{[l]}(k+1)=(I_N-\gamma L(\mathcal{G}))\bar{x}_{[l]}(k)+z_{[l]}(k), 
\end{equation}
 where $z_{[l]}(k)\sim\mathcal{N}(0,\cov_{z}),$ ${\cov_{z}=\gamma^{2}A(\mathcal{G})\cov_{v}A(\mathcal{G})+\cov_n}$, $\cov_v=\textnormal{diag}(\sigma_1^2,\dots\sigma_N^2),$ and $\cov_n=\textnormal{diag}(s_1^2,\dots,s_N^2)$.

\end{lemma}
\begin{proof}
    See~\ref{sec:ntwrk_dyn_proof}. 
\end{proof}

To analyze network-level performance, we define
$\beta_{[l]}(k):=\frac{1}{N}\fatone^{T}\bar{x}_{[l]}(k)\fatone,$
which is the state vector that the protocol in \eqref{dimJproto}
would converge to with initial state $x_{[l]}(k)$ and without privacy or process noise.
Also let 
\begin{equation}
e_{[l]}(k)=\bar{x}_{[l]}(k)-\beta_{[l]}(k),\label{errordiml}
\end{equation}
which is the offset of the current state from the state the protocol
would converge to without noise.

\section{Performance of Differentially Private Formation Control} \label{sec:closedform}
In this section we solve Problem \ref{prob:main}.\ref{prob2}.
First, we 
compute the total mean square error in a formation. 
Then, we use these results to derive performance bounds.
In the forthcoming proofs, we 
will analyze each dimension separately for simplicity.
However, results will be stated
in terms of all dimensions simultaneously because it is that form
of network-level performance that is ultimately of interest.

We quantify steady-state performance using the total mean-square error
of the network, denoted~$e_{ss}$. 
Agent $i$'s private formation control protocol in
Algorithm~\ref{alg:forms} runs 
in~$\mathbb{R}^d$, and each agent in the network runs this protocol. This is equivalent to 
running $d$ identical
copies of \eqref{dimJproto}, which is in $\mathbb{R}^N$. Thus, using~\eqref{dimJproto}, we can
compute the mean square error in dimension $l$ and then multiply
by $d$ to compute $e_{ss}$. 
The mean-square error in dimension~$l$ is equal
to~$\lim_{k \to \infty} \frac{1}{N} \sum_{i=1}^{N} E[e^2_{[l],i}(k)],$ where $e_{[l],i}(k)$ is the $i^{th}$ element of $e_{[l]}(k)$ in~\eqref{errordiml}.  
Then we have $
e_{ss}:=\lim_{k\to\infty}\frac{d}{N}\sum_{i=1}^{N}E\left[e_{[l],i}^{2}(k)\right].$

\subsection{Connections with the Lyapunov Equation}
\label{sec:lyap}
The main error bound in this paper uses the fact that we can represent the
total error in the system as the trace of a covariance matrix. We define
$\cov_{e_{[l]}}(k)=E\left[e_{[l]}(k)e_{[l]}(k)^{T}\right]$ and $\cov_{\infty}=\lim_{k\to\infty}\cov_{e_{[l]}}(k)$;
we omit the subscript~$l$ since~$\cov_{\infty}$ is the same
in each dimension. 

Then we have $e_{ss}=\frac{d}{N}Tr(\cov_{\infty}).$ Now we will
analyze the dynamics of $e_{[l]}(k)$ and $\cov_{e_{[l]}}(k)$. 
For a given~$\gamma > 0$ and a given graph~$\mathcal{G}$, 
let~${\mathcal{M}=I_N-\gamma L(\mathcal{G})-\frac{1}{N}\fatone\fatone^T}.$ 
Then we have the following.
\begin{lemma}
\label{lem:jbberror}
Consider Algorithm~\ref{alg:forms}, and let
Assumptions~\ref{as:graph} and~\ref{as:delta} hold. 
Then the network level error $e_{[l]}(k)$ in~\eqref{errordiml}
evolves via 
\begin{equation}
e_{[l]}(k+1)=\mathcal{M}e_{[l]}(k)+(I_N-\frac{1}{N}\fatone\fatone^T)z_{[l]}(k),\label{error_dyn}
\end{equation}
 $\cov_{e_{[l]}}(k)$ evolves according to 
\begin{equation}
\cov_{e_{[l]}}(k+1)=\mathcal{M}\cov_{e_{[l]}}(k)\mathcal{M}+(I_N-\frac{1}{N}\fatone\fatone^T)\cov_{z}(I_N-\frac{1}{N}\fatone\fatone^T),\label{cov_dyn}
\end{equation}
 and $\cov_{e_{[l]}}(k)$ can be computed via
\begin{equation}
\cov_{e_{[l]}}(k)=\sum_{i=0}^{k-1}\mathcal{M}^{i}(I_N-\frac{1}{N}\fatone\fatone^T)\cov_{z}(I_N-\frac{1}{N}\fatone\fatone^T)\mathcal{M}^{i}.
\label{cov_expand}
\end{equation}
\end{lemma}

\begin{proof}
    See~\ref{sec:error_dyn_proof}.
\end{proof}
The following result computes the spectral radius of a matrix
that will be used in our analysis below.
A similar bound is 
in~\cite{you2011network}, 
but that work selects 
a vector-valued gain, while we only vary~$\gamma$, which is a scalar-valued stepsize.

\begin{lemma} \label{lem:M_properties}
Let~$N$ agents communicate over a graph that 
satisfies Assumption~\ref{as:graph} 
with Laplacian $L(\mathcal{G})$.
Then the maximum singular value  of ${\mathcal{M}=I_N-\gamma L(\mathcal{G})-\frac{1}{N}\fatone\fatone^T}$
is $\sigma_{\max}(\mathcal{M})=1-\gamma\lambda_{2}(L)$. 

\end{lemma}
\begin{proof}
    See~\ref{sec:M_proof}.
\end{proof}
We prove the existence of the steady-state error covariance matrix~$\cov_{\infty}$ by showing
that it solves a discrete-time Lyapunov equation that has a unique solution. 

\begin{theorem}
\label{thm:lyap}

Consider~$N$ agents executing Algorithm~\ref{alg:forms}, and 
let Assumptions~\ref{as:graph} and~\ref{as:delta} hold. 
Then $\cov_{\infty}$ is the unique solution to the discrete time Lyapunov equation
\begin{equation}
 \cov_{\infty}=(I_N-\frac{1}{N}\fatone\fatone^T)\cov_z(I_N-\frac{1}{N}\fatone\fatone^T)
 +\mathcal{M}\cov_\infty \mathcal{M},  \label{lyap_eq} 
\end{equation}
where we have $\cov_z=\!\gamma^{2}A(\mathcal{G})\textnormal{diag}(\sigma_1^2,\dots,\sigma_N^2)A(\mathcal{G})
 \!+\!\cov_n$ and~${\sigma_i^2 = 
\frac{b_i}{2\epsilon_i}(K_{\delta_i} + \sqrt{K_{\delta_i}^{2} + 2\epsilon_i})}$ 
is the variance of privacy noise.

\end{theorem}
\begin{proof}
    From Lemma~\ref{lem:jbberror}, we
have 
$
\cov_{\infty} = \sum_{i=0}^{\infty}\mathcal{M}^{i}(I_N-\frac{1}{N}\fatone\fatone^T)\cov_{z}(I_N-\frac{1}{N}\fatone\fatone^T)\mathcal{M}^{i}.
$
Let $Q=(I_N-\frac{1}{N}\fatone\fatone^T)\cov_z(I_N-\frac{1}{N}\fatone\fatone^T).$
Using~$\mathcal{M}^0 = I_N$ and factoring gives
$
\cov_{\infty} = Q+\mathcal{M}\left(\sum_{i=1}^{\infty}\mathcal{M}^{i-1}Q\mathcal{M}^{i-1}\right)\mathcal{M} 
  = Q + \mathcal{M}\cov_{\infty}\mathcal{M}.
$
By Lemma~\ref{lem:M_properties}, 
$\mathcal{M}$ has eigenvalues strictly less than $1$, and by Prop.~2.1 in~\cite{gahinet1990sensitivity}, \eqref{lyap_eq} has a unique,
symmetric solution.
\end{proof}

Theorem~\ref{thm:lyap}, along with the fact that $e_{ss}=\frac{d}{N}Tr(\cov_{\infty})$,
allows us to solve for $e_{ss}$. Thus, given a graph $\mathcal{G}$ and privacy parameters $\{(\epsilon_i,\delta_i)\}_{i=1}^N,$ we can determine the performance of the network, encoded by $e_{ss}$ before runtime. 
We will use its value to design a communication topology that allows agents to be as private as possible while satisfying~$e_{ss}\leq e_R$, where $e_R$ is the maximum allowable error at steady-state. 

\subsection{Analytical Bound}
\label{sec:error_bounds}

Next we derive a scalar bound on $e_{ss}$ in terms of the privacy parameters $\{(\epsilon_i,\delta_i)\}_{i=1}^N,$ and properties of the graph $\mathcal{G}$. 

\begin{theorem}
\label{thm:error_bound}

Let all the conditions from Theorem~\ref{thm:lyap} hold. With $e_{ss}:=\lim_{k\to\infty}\frac{d}{N}\sum_{i=1}^{N}E\left[e_{[l],i}^{2}(k)\right],$
we have
\begin{equation}
e_{ss}\!\leq\!\frac{\gamma d \sum_{i=1}^{N}\!{C_{w_i,d_i}}
{\frac{b_i}{2\epsilon_i}\Big(K_{\delta_i} \!+\! \sqrt{K_{\delta_i}^{2} \!+\! 2\epsilon_i}\Big)} \!+\! \frac{N-1}{N\gamma}\sum_{i=1}^N\! s_i^2}{N\lambda_{2}(L(\mathcal{G}))\left(2-\gamma\lambda_{2}(L(\mathcal{G}))\right)}, \label{ethm2}
\end{equation}
where
$C_{w_i,d_i} = \sum_{j=1}^{N}w_{ij}^{2} -\frac{d_i^2}{N}$. 

\end{theorem}
\begin{proof}
    See~\ref{sec:big_proof}.
\end{proof}
Theorem~\ref{thm:error_bound} solves Problem~\ref{prob:main}.\ref{prob2}, and
we will use it below to design networks for both privacy and performance.   
For example, if the network must achieve $e_{ss}\leq e_R$, then
this can be achieved by requiring that
the bound in~\eqref{ethm2} is less than $e_R$. 
This requirement will impose joint constraints on the communication topology $\mathcal{G},$ the weights in the communication topology $w_{ij},$ and privacy parameters $\{(\epsilon_i,\delta_i)\}_{i=1}^N$, which we will use in co-design.

\section{Privacy and Network Co-design}
\label{sec:codesign}
In this section we solve Problem~\ref{prob:main}.\ref{prob3}. 
Our goal
is to design a network that meets global performance requirements and agent-level privacy requirements. 
If some agents
use strong privacy, 
then the high-variance privacy noise they use may make
global performance poor, even if
many other agents have only weak privacy requirements. This effect
can also be influenced by the topology
of the network, e.g., if an agent with strong privacy sends very noisy
messages to other agents along heavily weighted edges. 
Privacy/network co-design thus requires unifying and balancing these tradeoffs.

This co-design framework is designed for agents
that wish to participate in a joint co-design setup. 
Agent~$i$ is able to constrain the value of~$\epsilon_i$
that is generated by the co-design problem, and this 
constraint is permitted with the understanding that
agent~$i$ will use the value of~$\epsilon_i$
that is computed in solving the co-design problem.

\subsection{Co-Design Search Space}
We consider
$N$ agents that implement private formation control and each
agent has a minimum strength of privacy that it will accept. 
Our objective is to make agents as private as possible while meeting a global performance requirement.
In many settings,
each agent's neighborhood set will be fixed \emph{a priori} based on hardware
compatibility or physical location, and these neighborhoods specify an unweighted, 
undirected graph of edges that can be used. 

We will design the privacy parameters for each agent and all
edge weights for the given graph. 
That is, we can assign a zero or non-zero weight to each edge that is present,
but we cannot assign non-zero weight to an edge that is absent. 
We denote the given unweighted graph of possible edges by~$\mathcal{G}_0$ and denote
its unweighted Laplacian by~$L_0$. 
We define~$\mathcal{L}(L_{0})$ as the space of all
weighted graph Laplacians~$L$ such that~$L_{ij} = 0$
if~$L_{0,ij} = 0$. 
Co-design will find the optimal weighted
Laplacian in~$\mathcal{L}(L_0)$ and privacy
parameters for each agent. 

\subsection{Co-Design Problem Statement}
Below we derive the privacy/network co-design problem. 

\begin{problem}[Ideal Co-design]
\label{prob:codesign}
Given an undirected, unweighted, simple graph $\mathcal{G}_{0}$ with Laplacian~$L_0,$ a required global error bound $e_R,$ an edge weight budget~$B,$ and a minimum level of privacy $\epsilon_i^{max}$ for agent $i$ for all $i\in[N]$, to co-design privacy and agents' communication topology, solve
\begin{align}
&\min_{L(\mathcal{G})\in\mathcal{L}(L_{0}),\{\epsilon_i\}_{i=1}^N} \quad \sum_{i=1}^N\epsilon_{i}^{2}-\lambda_{2}(L(\mathcal{G})) \label{eq:budget-biconvex-codesign}\\
&\textnormal{subject to }\\
&\hspace{-9pt}\frac{\gamma d \sum_{i=1}^{N}{C_{w_i,d_i}}
{\frac{b_i}{2\epsilon_i}\Big(K_{\delta_i} \!+\! \sqrt{K_{\delta_i}^{2} \!+\! 2\epsilon_i}\Big)} \!+\! \frac{N-1}{N\gamma}\sum_{i=1}^N s_i^2}{N\lambda_{2}(L)\left(2-\gamma\lambda_{2}(L)\right)} \!\leq\! e_{R}\\
 \quad&\quad\epsilon_{i}\leq\epsilon_{i}^{max}\quad\textnormal{for all }i\textnormal{ and }\textnormal{Tr}(L(\mathcal{G}))\leq 2B
.\end{align} 
\end{problem}

{In the objective function, the term $\sum_{i = 1}^{N} \epsilon_{i}^{2}$, helps make each agent's privacy stronger since
privacy is stronger as~$\epsilon_i$ shrinks. 
{We choose a quadratic function to penalize large privacy parameters more heavily,
but this choice is not essential for our framework, and any convex
function of the decision variables~$\{\epsilon_i\}_{i \in [N]}$ is admissible.
}
For the term $-\lambda_{2}(L(\mathcal{G})),$ minimizing this helps produce a network which is as connected as possible, i.e., maximizing the algebraic connectivity will produce a solution which promotes better network-level convergence properties.
} 
{The first constraint requires that
the total mean square error of the network  is less than
some user-defined value $e_{R}$. 
The second constraint
requires that each agent's privacy level obeys the constraint
that the agent has specified. 
The last constraint budgets how much edge weight can be used, specifically the total edge weight used may not exceed~$B.$} 

Overall, solving Problem~\ref{prob:codesign} will produce a communication topology and privacy parameters that will meet a global performance constraint while allowing agents to be as private as possible. This solves Problem~\ref{prob:main}.\ref{prob3}.
The main computational challenge in solving Problem~\ref{prob:codesign}
comes from the objective function and first constraint. Both depend on the algebraic
connectivity of the graph~$\mathcal{G}$, namely~$\lambda_2\big(L(\mathcal{G})\big)$, and for~$N$ nodes 
the computation of~$\lambda_2\big(L(\mathcal{G})\big)$
has~$O(N^3)$ complexity~\cite{parlett2000qr}. 
Furthermore, Problem~\ref{prob:codesign} is nonconvex due to the constraint
on steady-state error, and thus one typically cannot expect 
to find a global solution to it.
To address these challenges while maintaining the essential structure of the co-design problem, we develop a relaxed formulation in the following subsection.

\subsection{Biconvex Co-Design}

We introduce a biconvex relaxation of Problem~\ref{prob:codesign} that has more favorable numerical properties while maintaining the structure of the original problem. We present the relaxed problem first, then discuss its derivation, biconvexity, and solution approach.

\begin{problem}[Implemented Co-design]
\label{prob:biconvex-codesign}
Given the same inputs as Problem~\ref{prob:codesign}, solve

\begin{align}
&\min_{L(\mathcal{G})\in\mathcal{L}(L_{0}),\{\epsilon_i\}_{i=1}^N,y\in[0,N]} \quad \sum_{i=1}^N\epsilon_{i}^{2}-y \label{eq:biconvex_cod}\\
&\textnormal{subject to }\\
&\hspace{-9pt}\frac{\gamma d \sum_{i=1}^{N}{C_{w_i,d_i}}
{\frac{b_i}{2\epsilon_i}\Big(K_{\delta_i} \!+\! \sqrt{K_{\delta_i}^{2} \!+\! 2\epsilon_i}\Big)} \!+\! \frac{N-1}{N\gamma}\sum_{i=1}^N s_i^2}{Ny\left(2-\gamma y\right)} \!\leq\! e_{R}\\
 \quad&\quad\epsilon_{i}\leq\epsilon_{i}^{max}\quad\textnormal{for all }i\textnormal{ and }\textnormal{Tr}(L(\mathcal{G}))\leq 2B \\
 \quad & \quad y\leq \lambda_2(L(\mathcal{G})).
\end{align} 
\end{problem}

To derive Problem~\ref{prob:biconvex-codesign}, we decouple the nonconvex constraint involving $\lambda_2(L(\mathcal{G}))$ by introducing an auxiliary variable $y\in[0,N]$.
We rewrite the error constraint from Problem~\ref{prob:codesign} as
\begin{equation}
    \frac{\gamma d}{N e_R} \sum_{i=1}^{N}{C_{w_i,d_i}}
{\frac{b_i}{2\epsilon_i}\Big(K_{\delta_i} \!+\! \sqrt{K_{\delta_i}^{2} \!+\! 2\epsilon_i}\Big)} \!+\! \frac{N-1}{e_R N^2\gamma}\sum_{i=1}^N s_i^2 \!\leq\! \lambda_{2}(L)(2-\gamma \lambda_{2}(L(\mathcal{G}))),
\end{equation}
and then split this into two separate constraints: first requiring that the left-hand side be bounded by $y(2-\gamma y)$, and second requiring that $y\leq\lambda_2(L)$.
The problem still requires careful handling due to the product of $C_{w_i,d_i}$ and the function of $\epsilon_i$ in the first constraint. 
This product is what creates the biconvex structure that we analyze next.

{\begin{definition}[Biconvex program~\cite{floudas1990global}]
\label{def:bi}
Let $X \subseteq \mathbb{R}^m$ and $Y \subseteq \mathbb{R}^n$ be convex sets, and let $(x,y) \in X \times Y$. 
A function $f : X \times Y \to \mathbb{R}$ is biconvex if, for every fixed $y \in Y$, the function $x \mapsto f(x,y)$ is convex on $X$, and for every fixed $x \in X$, the function $y \mapsto f(x,y)$ is convex on $Y$. 
A constraint set $S \subseteq X \times Y$ is biconvex if, for every fixed $y \in Y$, the set $\{x \in X \mid (x,y) \in S\}$ is convex, and for every fixed $x \in X$, the set $\{y \in Y \mid (x,y) \in S\}$ is convex. 
An optimization problem of the form
$$
\min_{(x,y)\in X\times Y} f(x,y) \quad \text{subject to } (x,y)\in S
$$
is called a biconvex program if $f$ and $S$ are biconvex.
\end{definition}}

This framework requires partitioning the decision variables of Problem~\ref{prob:biconvex-codesign} into two disjoint sets, and we partition them into $(L(\mathcal{G}_0),y)$ and $\left(\{\epsilon_i\}_{i=1}^N\right).$
The following theorem establishes that Problem~\ref{prob:biconvex-codesign} is biconvex.
{\begin{theorem}\label{thm:codesign-biconvex}
    Given an undirected, simple, weighted graph $\mathcal{G}_{0},$ a global error requirement $e_R,$ an edge budget $B,$ and a minimum level of privacy $\epsilon_i^{max}$ for agent $i$ for all $i\in[N],$
    Problem~\ref{prob:biconvex-codesign} is a biconvex program with respect the variables $(L(\mathcal{G}_0),y)$ and $\{\epsilon_i\}_{i=1}^N$.
\end{theorem}}
\begin{proof}
     The objective function~\eqref{eq:biconvex_cod} is positive definite and quadratic as a function of $\{\epsilon_i\}_{i=1}^N$ and is thus convex in the privacy parameters.
      The objective is linear in $y.$ This implies that the objective is biconvex in $(L(\mathcal{G}_0),y)$ and $\{\epsilon_i\}_{i=1}^N.$
    
     {To analyze the error constraint
\begin{equation}\label{eq:error_constraint_biconvex}
    \frac{\gamma d}{N e_R} \sum_{i=1}^{N}{C_{w_i,d_i}}
    {\frac{b_i}{2\epsilon_i}\Big(K_{\delta_i} \!+\! \sqrt{K_{\delta_i}^{2} \!+\! 2\epsilon_i}\Big)} +\! \frac{N-1}{e_R N^2}\sum_{i=1}^N s_i^2 -y(2-\gamma y) \!\leq\! 0,
    \end{equation}
     recall that $C_{w_i,d_i} = \sum_{j=1}^{N}w_{ij}^{2} -\frac{d_i^2}{N}$ and ${d_i = \sum_{j \mid (i,j) \in E} w_{ij}}.$ For $i\in[N],$ let $w_i=[w_{i1},\dots,w_{iN}]^T\in\mathbb{R}^N.$ Then
     \begin{align}
         C_{w_i,d_i} &= \sum_{j=1}^{N}w_{ij}^{2} -\frac{d_i^2}{N}=w_i^T w_i-\frac{1}{N}\big( \fatone^Tw_i\big)^2\\
         &=w_i^T w_i-\frac{1}{N}w_i^T\fatone\fatone^Tw_i=w_i^T\big( I_N-\frac{1}{N}\fatone \fatone^T\big)w_i.
     \end{align}
    Note that $I_N-\frac{1}{N}\fatone \fatone^T\succeq0$  and thus $C_{w_i,d_i}$ is convex for all $i\in[N].$
    It can be shown that $\frac{b_i}{2\epsilon_i}\big(K_{\delta_i} +\sqrt{K_{\delta_i}^{2} \!+\! 2\epsilon_i}\big)$ is a convex, nonincreasing function of $\epsilon_i$ for any $b_i>0$ and~$\delta_i\in(0,1/2).$}
    Lastly, $-y(2-\gamma y)$ is convex in $y.$
    Therefore, for fixed $y$ and $w_{ij},$ the error constraint~\eqref{eq:error_constraint_biconvex} is convex in $\{\epsilon_i\}_{i=1}^N.$
    Similarly, for fixed $\{\epsilon_i\}_{i=1}^N,$ the error constraint~\eqref{eq:error_constraint_biconvex} is convex in $y$ and $w_{ij}.$
    Thus~\eqref{eq:error_constraint_biconvex} is biconvex in
    $(L(\mathcal{G}_0),y)$ and $\{\epsilon_i\}_{i=1}^N.$

    The algebraic connectivity $\lambda_{2}(L(\mathcal{G}))$ can be expressed as $
         \lambda_2(L(\mathcal{G}))=\min_{x\textnormal{ s.t. }\|x\|=1,\ x^T\fatone=0} x^T L(\mathcal{G}) x.$
    This is an infimum over a set of linear maps and is therefore concave as a function of edge weights, and the constraint $y\leq\lambda_{2}(L(\mathcal{G}))$ is convex.
    Lastly, the constraint $\textnormal{Tr}(L(\mathcal{G}))\leq 2B$ is a linear function of edge weights and is therefore convex.
    Overall, this shows that Problem~\ref{prob:biconvex-codesign} meets the definition of a biconvex program from {Definition~\ref{def:bi}}.
\end{proof}

Having established that Problem~3 is biconvex, we now consider approaches for its solution.
{A global optimization algorithm for biconvex programs exists \cite{floudas1990global}, but it guarantees global optimality by iteratively partitioning the feasible region and maintaining associated convex relaxations and bounds for each subregion of the partition. 
As the number of subregions grows, this method must store a growing collection of subproblems and bounding information, which can lead to exponential space complexity in the worst case. Instead, we employ an alternate convex search approach \cite[Algorithm 4.1]{gorski2007biconvex} that maintains a single iterate and alternates between two convex subproblems. At each iteration, only the current decision variables and fixed problem data are stored, and thus the space complexity scales as $O(|E| + N)$.}
Specifically, our algorithm alternates between: (1) solving for optimal edge weights $\{w_{ij}\}_{i,j\in[N]}$ and auxiliary variable $y$ while fixing privacy parameters $\{\epsilon_i\}_{i\in[N]}$, and (2) solving for optimal privacy parameters $\{\epsilon_i\}_{i\in[N]}$ while fixing weights and $y$.

{This approach has several desirable properties. First, each subproblem is convex and can be solved efficiently using standard convex optimization tools. 
The wide availability of software for solving optimization problems therefore makes it simple to deploy this framework in practice. 
Second, the alternating scheme is guaranteed to converge to a stationary point since the objective function is bounded below and either decreases or remains constant at each iteration~\cite{gorski2007biconvex}. 
We deliberately avoid adding any more assumptions that could strengthen our convergence analysis because we want this co-design framework
to have the broadest reach possible. 
Third, while global optimality is not analytically guaranteed, our numerical results in the following section demonstrate that our solutions 
yield high-quality co-designs that successfully meet both privacy and formation requirements in practice.
In fact, as we will show, computational co-design results often exceed the minimum privacy requirements, exceed the minimum performance requirements,
or exceed both, indicating strong levels of performance in practice. 
The usefulness of these stationary points follows from the analytical performance characterizations developed earlier, which ensure that any feasible stationary solution satisfies the desired privacy and steady-state performance guarantees.
Strong performance numerically thus follows from the preceding analytical developments that have been made. 
}
\section{Simulations}
\label{sec:sims}
In this section we provide simulation results for optimal privacy/network 
co-design\footnote{{All code is available on GitHub~\cite{hawkingithub}.}}.
{This section does not provide any numerical comparisons to prior work
because we are not aware of any comparable prior work that co-designs
a network topology and a trajectory-level differential privacy implementation.}
We first define an input topology as the undirected, unweighted, simple graph $\mathcal{G}$ shown in Figure~\ref{FixedTopology}. Then, we run privacy/network co-design 
to obtain edge weights and privacy parameters~$\{\epsilon_i\}_{i=1}^N$.
In these results, the smaller a node is drawn the more private it is, i.e., $\epsilon_i$ gets smaller as node $i$ shrinks, and the thicker an edge is drawn the more weight it has. In simulation, edges with weights less than~$10^{-4}$ are considered deleted.
{
Within each example, the same solution was reached regardless of the initial iterate used by the optimization algorithm. 
}

\emph{Example 1: (Trading off privacy and performance)}
Fix the input graph $\mathcal{G}$ shown in Figure~\ref{FixedTopology}. Fix 
$\epsilon^{max}= [0.4 ,0.9, 0.55, 0.35, 0.8, 0.45, 0.7, 0.5, 0.52, 0.58]^T,$
$B=6,$ $\gamma=1/2N,$ $\delta_i=0.05,$ and $b_i=1$ for all $i.$ Now let $e_R$ take on the values $e_R\in\{8,16,64\}.$ For each of these values, privacy/network co-design was used to design the weighted graphs shown in Figure~\ref{fig:er_tune} and the privacy parameters shown in Figure~\ref{fig:eR_tune_bar}.

In Figure~\ref{fig:er_tune}, as we allow weaker performance, quantified by larger $e_R,$ the agents are able to use a much stronger level of privacy. This is illustrated by the nodes shrinking as $e_R$ increases from Figure~\ref{fig:biggest_er} to~\ref{fig:smallest_er}.
The edge weights are roughly constant due to the fixed budget of $B=6$ and the biconvex nature of the problem.
\begin{figure}
\centering
    \includegraphics[width=0.15\textwidth]{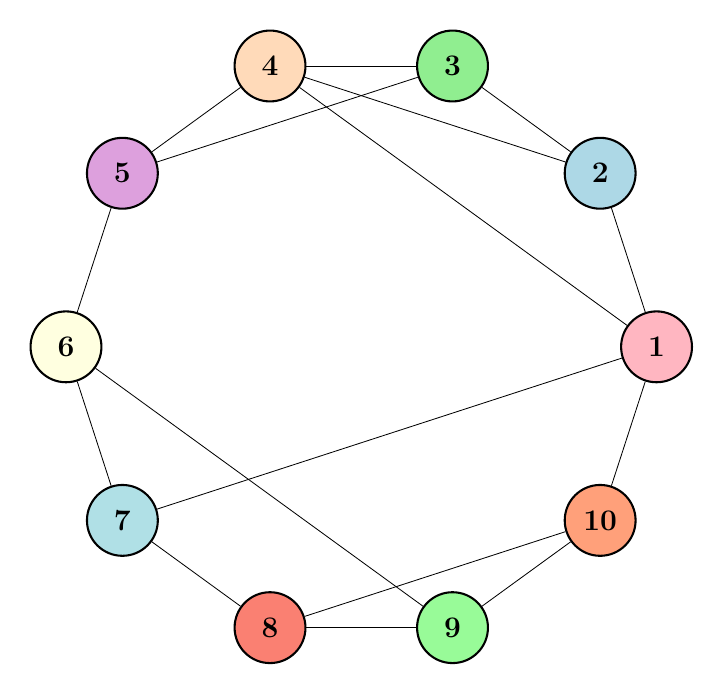}
    \caption{The graph used for simulations. Edge weights are not specified and will be optimized over.
    }
    \label{FixedTopology}
\end{figure}
In Figure~\ref{fig:eR_tune_bar}, we also see that as the required level of performance decreases, co-design allows the agents to be more private. This trend persists for all agents in varying magnitudes, which are influenced by the topology and each $\epsilon_i^{max}.$ 
Moreover, Figure~\ref{fig:eR_tune_bar} shows that, even under
the tightest constraint on~$e_{ss}$, agents implement values of~$\epsilon_i$ ranging from approximately~$0.04$ to~$0.07$, which
are in line with some of the strongest privacy protections that have been used in the literature~\cite{hsu2014differential}. 
Thus, even when using input perturbation for privacy, 
there is an inherent compatibility between privacy and formation control, and
agents can simultaneously attain both high performance and strong privacy protections. 
\hfill$\triangle$

\begin{figure*}
\begin{subfigure}[t]{\textwidth}
    \centering
           \subfloat[$e_R= 8$\label{fig:biggest_er}]{
      \scalebox{.3}{\includegraphics[]{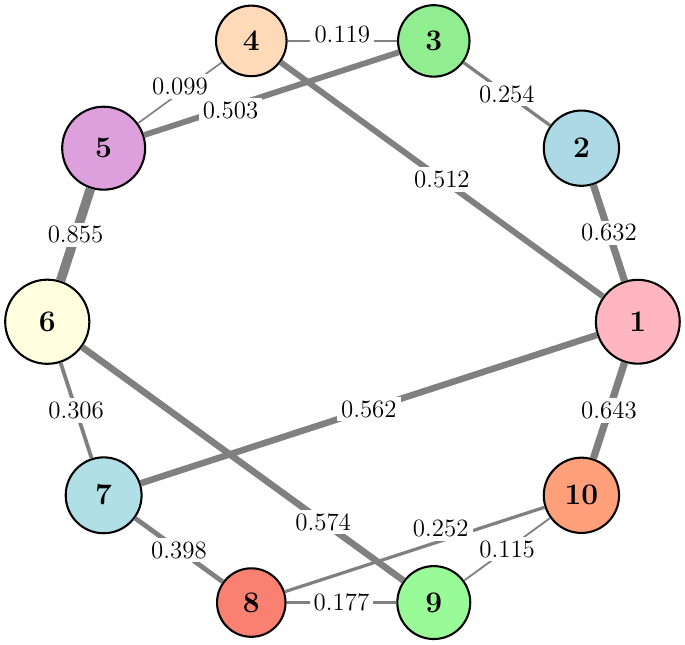}}
      }
      \centering
       \subfloat[$e_R=16$]{%
        \scalebox{.3}{\includegraphics[]{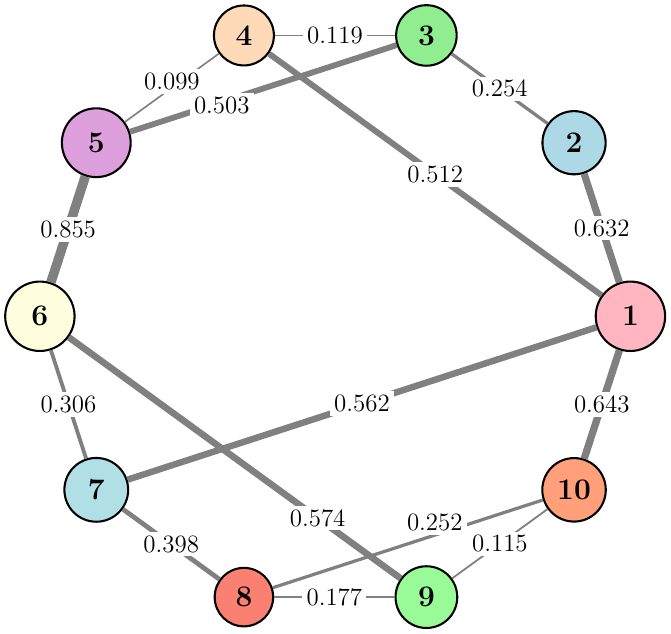}}
        }
   \subfloat[$e_R=64$\label{fig:smallest_er}]{
   \scalebox{.3}{\includegraphics[]{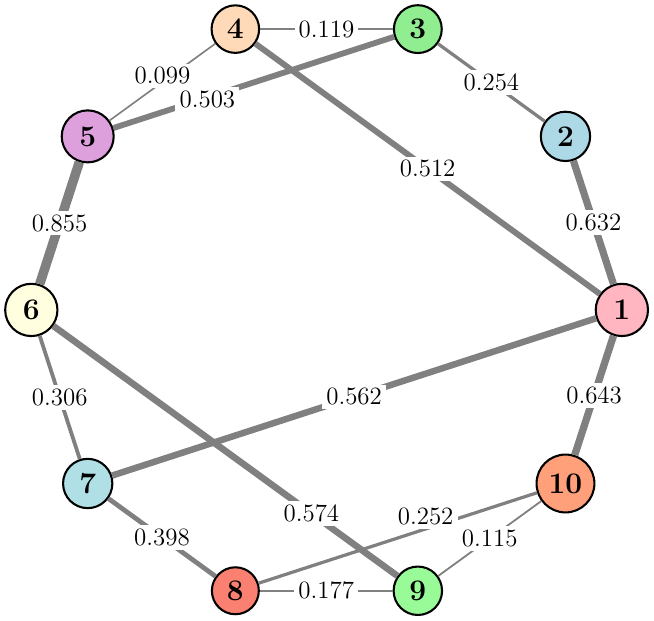}}
   }
    \end{subfigure}
  \caption{{Graphs generated in Example $1$ for $e_R\in\{8,16,64\}.$ The smaller a node is drawn, the smaller
  its value of $\epsilon_i$ is. The thicker an edge is drawn, the larger its weight is. When we allow weaker performance, indicated by larger $e_R,$ each agent has a stronger level of privacy and the topology uses less weight as illustrated by the nodes shrinking from Figure~\ref{fig:biggest_er} to Figure~\ref{fig:smallest_er}.}
}
  \label{fig:er_tune} 
\end{figure*}
\begin{figure}
\vspace{-\baselineskip}
    \centering
    \setlength{\figH}{5cm}
    \setlength{\figW}{9cm}
    \includegraphics[]{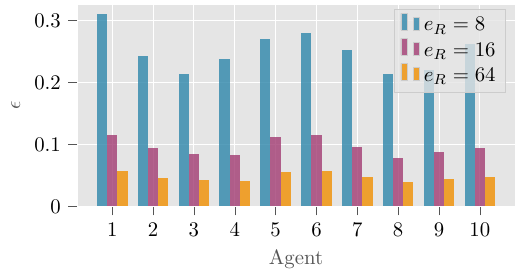}
    \caption{{The privacy parameters designed in Example $1$ with $e_R\in\{8,16,64\}$. }}
    \label{fig:eR_tune_bar}
\end{figure}

\emph{Example 2: (Trading off edge budget and privacy)}
We fix all the parameters as given in Example $1,$ however, we set $e_R=16.0$ and let $B$ take on the values $B\in\{2,4,8\},$ i.e., we fix a level of performance and modulate the edge weight budget.
The co-designed edge weights and privacy parameters are given in Figures~\ref{fig:tune_budget} and~\ref{fig:budget_bars}.

In Figure~\ref{fig:tune_budget}, as the budget is increased more edge weight is used and the agents get \emph{less} private. As $B$ is increased more edge weight is used to increase the $\lambda_2(L(\mathcal{G}))$ term of the objective. However, high edge weights amplify the privacy noise injected by agents, which has a negative impact on steady state error. Therefore, to meet the performance requirement of $e_R=16,$ co-design is making agents less private when we produce networks with high edge weights. This is directly controlled by $B.$ The numerical values of  the privacy parameters are presented in Figure~\ref{fig:budget_bars}, where we observe the privacy parameters roughly doubling between $B=2$ and $B=8.$
{\begin{remark}
    These results show that the proposed privacy and network co-design framework effectively handles the increased amount of noise required by input perturbation by incorporating the privacy noise in the system analysis and design.
\end{remark}}

\begin{figure*}
    \centering
  \subfloat[$B=2$\label{fig:smallest_lambda}]{%
      \scalebox{.3}{\includegraphics[]{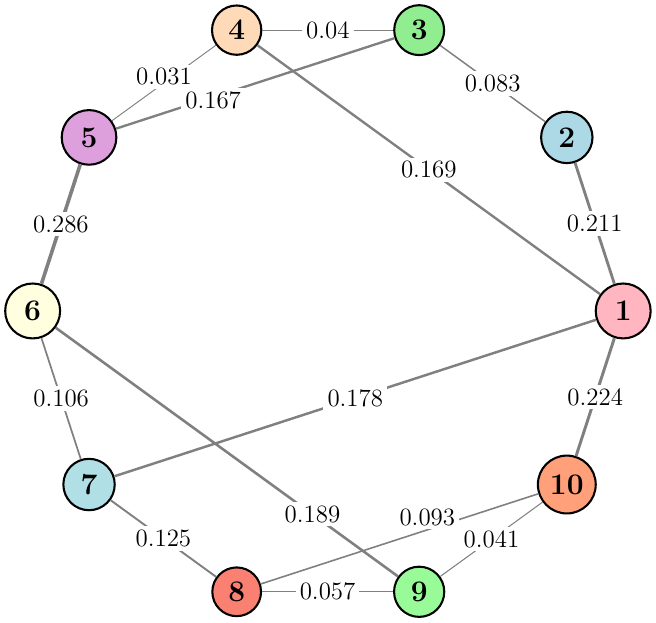}}
      }
  \subfloat[$B=4$]{%
        \scalebox{.3}{\includegraphics[]{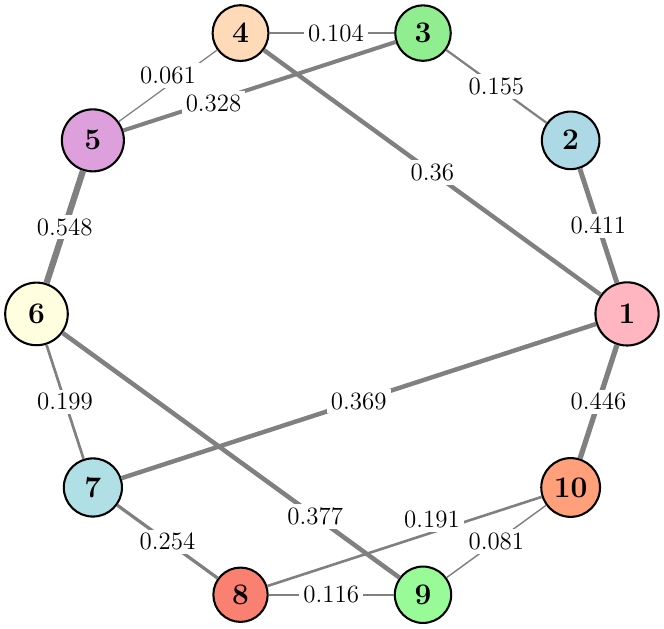}}
        }
  \subfloat[$B=8$\label{fig:biggest_lambda}]{%
        \scalebox{.3}{\includegraphics[]{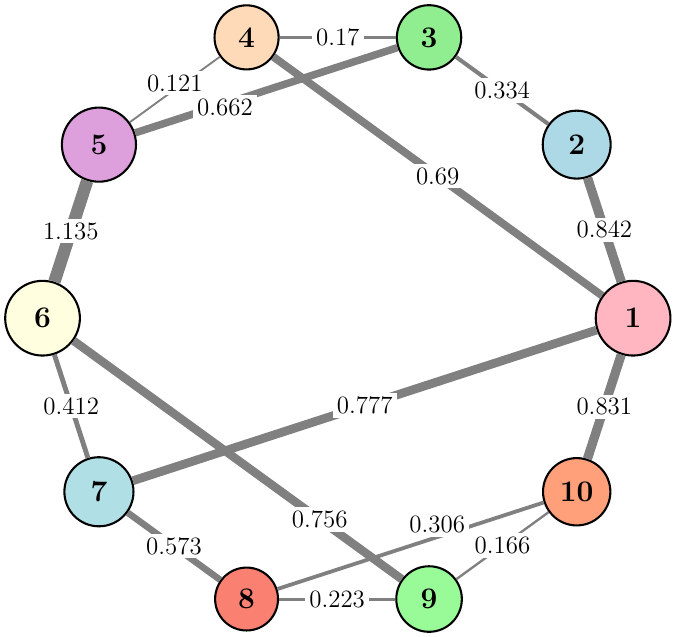}}
        }
  
  \caption{ The outputs of privacy/network co-design with fixed parameters specified in Example $2$ for different values of $B\in\{2,4,8\}.$}
  \label{fig:tune_budget} 
\end{figure*}

\begin{figure}
    \centering
    \setlength{\figH}{5cm}
    \setlength{\figW}{9cm}
    \includegraphics[]{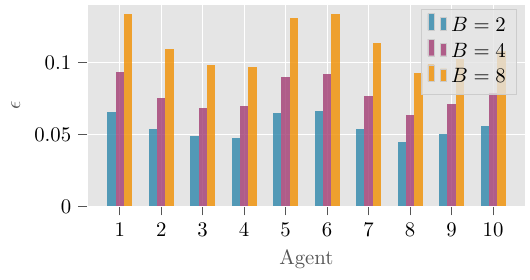}
    \caption{The privacy parameters from Example $2$ with $B\in\{2,4,8\}$. }
    \label{fig:budget_bars}
\end{figure}
\section{Conclusions}\label{sec:conclusions}
We have developed a framework to design networks and 
privacy parameters for differentially private formation control,
which allows agents to privately assemble formations with bounded
steady-state error. This framework was developed for cooperative agents,
and future work includes quantifying the extent to which it is robust
to non-cooperative and adversarial agents. Such agents may inject
false information into the network (among other attacks) and a promising
direction for future work is to explore how 
privacy's insensitivity to changes in one agent's state trajectory
can help make such attacks less effective. 

\appendix
\section{Proofs of Supporting Lemmas}
\subsection{Proof of Lemma~\ref{lem:ntwrk_dyn}}\label{secA1}
\label{sec:ntwrk_dyn_proof}
From~\eqref{DP_node_level}, we subtract $p_i$ from both sides to find
\[
\bar{x}_{i}(k+1)=\bar{x}_{i}(k)+\gamma\sum_{j\in N_i}w_{ij}(\bar{x}_{j}(k)+v_{j}(k)-\bar{x}_{i}(k))+n_i(k),
\]
 which we factor as 
\begin{equation}
\bar{x}_{i}(k+1)=\bar{x}_{i}(k)+\gamma\sum_{j\in N_i}w_{ij}(\bar{x}_{j}(k)-\bar{x}_{i}(k))+z_i(k),
\end{equation}
where $z_{i}(k)=\gamma\sum_{j\in N_i}w_{ij}v_{j}(k)+n_i(k).$ Note that 
$z_{i}(k)=\gamma \big([A(\mathcal{G})]_{row\ i}\otimes I_d \big)v(k) +n_i(k),
$
where~$[A(\mathcal{G})]_{row\ i} \in \mathbb{R}^{1 \times N}$ is the~$i^{th}$ row
of~$A(\mathcal{G})$ and~$\otimes$ is the Kronecker product. 
 Next we define the vector~$$z_{[l]}(k)=[z_{1_{[l]}}(k)^{T},\dots,z_{N_{[l]}}(k)^{T}]^{T}\in\mathbb{R}^N$$ 
giving $z_{[l]}(k)=\gamma A(\mathcal{G})v_{[l]}(k)+n_{[l]}(k)$ at the network level. The covariance of $z_{[l]}(k)$ can then be calculated as
${\cov_{z}  = E[\left(\gamma A(\mathcal{G})v_{[l]}(k)+n_{[l]}(k)\right)\left(\gamma A(\mathcal{G})v_{[l]}(k)+n_{[l]}(k)\right)^{T}]}$.
Since $v_{[l]}(k)$ and $n_{[l]}(k)$ are statistically independent, $E[v_{[l]}(k)n_{[l]}(k)^T]=E[n_{[l]}(k)v_{[l]}(k)^T]=0$. Applying 
this and the linearity of expectation gives
${\cov_{z} 
 =\gamma^{2}A(\mathcal{G})\cov_{v}A(\mathcal{G})+\cov_n}$.
 Then $z_{[l]}(k)\sim\mathcal{N}(0,\gamma^{2}A(\mathcal{G})\cov_{v}A(\mathcal{G})+\cov_n).$
 \hfill $\qed$

\subsection{Proof of Lemma~\ref{lem:jbberror}}
\label{sec:error_dyn_proof}
Expanding the error term~$e_{[l]}$ gives

$e_{[l]}(k+1) =\left(I_N-\gamma L(\mathcal{G})-\frac{1}{N}\mathbb{1}\mathbb{1}^{T}\right)\bar{x}_{[l]}(k)+\left(I_N-\frac{1}{N}\mathbb{1}\mathbb{1}^{T}\right)z_{[l]}(k),$
where we have used the fact that $L(\mathcal{G})\fatone=0.$ Let ${\mathcal{M}=I_N-\gamma L(\mathcal{G})-\frac{1}{N}\fatone\fatone^T}$ and note that we also have
$
\mathcal{M}\beta_{[l]}(k)=\left(I_N-\frac{1}{N}\mathbb{1}\mathbb{1}^{T}\right)\beta_{[l]}(k)=0.
$
Then we have the equality
$e_{[l]}(k+1) 
 =\mathcal{M}e_{[l]}(k)+\left(I_N-\frac{1}{N}\mathbb{1}\mathbb{1}^{T}\right)z_{[l]}(k)$,
which proves~\eqref{error_dyn}.
Plugging this into the definition of $\cov_{e_{[l]}}(k+1)$ gives
$
\cov_{e_{[l]}}(k+1) =\mathcal{M}\cov_{e_{[l]}}(k)\mathcal{M}+\left(I_N-\frac{1}{N}\mathbb{1}\mathbb{1}^{T}\right)\cov_{z}\left(I_N-\frac{1}{N}\mathbb{1}\mathbb{1}^{T}\right),
$
which proves~\eqref{cov_dyn}. Then~\eqref{cov_expand} follows by applying~\eqref{cov_dyn} recursively.\hfill$\qed$

\subsection{Proof of Lemma~\ref{lem:M_properties}}
\label{sec:M_proof}
We begin by analyzing the eigenvalues of $\mathcal{M}.$ Note that $\mathcal{M}\fatone=0,$
and thus $\mathcal{M}$ has eigenvalue $0$ with eigenvector $\fatone.$ Now let $(\lambda,\ v)$ be an eigenpair of $\mathcal{M}$ with $\lambda\neq0.$ We will show that $\lambda\neq0$ implies that $(\lambda,\ v)$ is
also an eigenpair of $I-\gamma L(\mathcal{G}).$ Since $\mathcal{G}$ is connected and undirected, $I-\gamma L(\mathcal{G})$ is a doubly stochastic matrix.
Thus, it has one eigenvalue of modulus $1$ with an eigenvector of $\fatone$, and the other eigenvalues are positive and strictly inside the unit disk \cite[Lemma 3]{olfati2007consensus}.

Using $\fatone^{T}(I_N-\gamma L(\mathcal{G}))=\fatone^{T},$ we have $
\fatone^{T}v=\fatone^{T}(I_N-\gamma L(\mathcal{G}))v.$
Adding and subtracting $\fatone^{T}(\frac{1}{N}\fatone\fatone^T)v$ gives
$\fatone^{T}v =\fatone^{T}\mathcal{M}v+\fatone^{T}(\frac{1}{N}\fatone\fatone^T)v$,
then plugging in $\mathcal{M}v=\lambda v$ and $\fatone^{T}(\frac{1}{N}\fatone\fatone^T)=\fatone^{T}$ gives $
\fatone^{T}v =\lambda\fatone^{T}v+\fatone^{T}v=(\lambda+1)\fatone^{T}v.
$
Since $\lambda\neq0,$ we must have $\fatone^{T}v=0$, and thus $v$
is orthogonal to $\fatone.$ Furthermore we have that $(\frac{1}{N}\fatone\fatone^T)v=0,$ so
$\lambda v=\mathcal{M}v =(I_N-\gamma L(\mathcal{G})-\frac{1}{N}\fatone\fatone^T)v=(I_N-\gamma L(\mathcal{G}))v$. 
 Any non-zero eigenvalue of $\mathcal{M}$ is also an eigenvalue of $I-\gamma L(\mathcal{G})$ and the eigenvector $v$ is orthogonal to $\fatone.$

Furthermore, $L(\mathcal{G})$ and $I_N-\gamma L(\mathcal{G})$ have the same eigenvectors and $\lambda_i(I_N-\gamma L(\mathcal{G}))=1-\gamma\lambda_i(L(\mathcal{G})).$ Thus, $\lambda_i(\mathcal{M})=1-\gamma\lambda_i(L(\mathcal{G}))$ for ${i\in\{2,\dots,N\}}$, and
we have $\lambda_{\max}(\mathcal{M})=1-\gamma \lambda_2(L(\mathcal{G}))<1.$
Thus, all eigenvalues of $\mathcal{M}$ lie strictly in the unit disk. Now $\sigma_{\max}(\mathcal{M}) = 1-\gamma\lambda_{2}(L)$
follows from the fact that
$\left[\sigma_{\max}(\mathcal{M})\right]^{2} =\left[\left(\lambda_{\max}\left((\mathcal{M})(\mathcal{M})\right)\right)^{1/2}\right]^{2}=\lambda_{\max}\left((\mathcal{M})\right)^{2}=\left(1-\gamma\lambda_{2}(L(\mathcal{G}))\right)^{2}.$
\hfill$\qed$

\section{Proof of Theorem~\ref{thm:error_bound}}
\label{sec:big_proof}
With Theorem~\ref{thm:lyap} and~\cite[Equation (151)]{kwon1996bounds} we have 
\begin{equation}
Tr\left(\cov_{\infty}\right)\leq\frac{Tr(Q)}{1-\left(\sigma_{\max}(\mathcal{M})\right)^{2}},\label{survey_bound}
\end{equation}
where $\sigma_{\max}(\mathcal{M})$ denotes the maximum singular value of
$\mathcal{M}.$

We next expand $Tr(Q)$ in~\eqref{survey_bound}
and apply the cyclic permutation property of the trace to find 
\begin{align}
 Tr(Q) &=Tr\left((I_N-\frac{1}{N}\fatone\fatone^{T})(I_N-\frac{1}{N}\fatone\fatone^{T})\cov_z\right).\label{trace_temp}
 \end{align}
Note that $(I_N-\frac{1}{N}\fatone\fatone^{T})(I_N-\frac{1}{N}\fatone\fatone^{T})=I_N-\frac{1}{N}\fatone\fatone^{T}.$ Plugging this and $\cov_z=\gamma^{2}A(G)\cov_{v}A(G)+\cov_{n}$ into~\eqref{trace_temp} gives 
\begin{equation}
    Tr(Q) =\gamma^{2}Tr\left((I_N-\frac{1}{N}\fatone\fatone^{T})A(G)\cov_{v}A(G)\right)+Tr\left((I_N-\frac{1}{N}\fatone\fatone^{T})\cov_{n}\right).\label{total_trace}
\end{equation}
With $\cov_{n}=\textnormal{diag}(s_{1}^{2},\dots,s_{N}^{2})$, the $i^{th}$ diagonal
term of $(I_N-\frac{1}{N}\fatone\fatone^{T})\cov_{n}$ is given by $
\left[(I_N-\frac{1}{N}\fatone\fatone^{T})\cov_{n}\right]_{ii}=(1-\frac{1}{N})s_{i}^{2}.$
Then we have that the last term in~\eqref{total_trace} is
\begin{equation}
Tr\left((I_N-\frac{1}{N}\fatone\fatone^{T})\cov_{n}\right)=\frac{N-1}{N}\sum_{i=1}^N s_{i}^{2}. \label{first_trace}
\end{equation}
To simplify $\gamma^{2}Tr\left((I_N-\frac{1}{N}\fatone\fatone^{T})A(G)\cov_{v}A(G)\right)$,
cyclic permutation of the trace gives that
$$
    \gamma^{2}Tr\left((I_N-\frac{1}{N}\fatone\fatone^{T})A(G)\cov_{v}A(G)\right)=\gamma^{2}Tr\left(A(G)(I_N-\frac{1}{N}\fatone\fatone^{T})A(G)\cov_{v}\right).
$$
The $i^{th}$ diagonal term of $A(G)(I_N-\frac{1}{N}\fatone\fatone^{T})A(G)$ is $\left[A(G)(I_N-\frac{1}{N}\fatone\fatone^{T})A(G)\right]_{ii} =\sum_{j=1}^N w_{ij}^{2}-\frac{d_{i}^{2}}{N},$
and
\begin{equation}
\gamma^{2}Tr\left(A(G)(I_N-\frac{1}{N}\fatone\fatone^{T})A(G)\cov_{v}\right) = \gamma^{2}\sum_{i=1}^N C_{w_i,d_i}\sigma_{i}^{2}.\label{second_trace}
\end{equation}
Using~\eqref{first_trace}-\eqref{second_trace} in~\eqref{total_trace} gives
$
Tr(Q)=\gamma^{2}\sum_{i=1}^N C_{w_i,d_i}\sigma_{i}^{2}+\frac{N-1}{N}\sum_{i=1}^N s_{i}^{2},
$
and plugging this into~\eqref{survey_bound} gives
$Tr\left(\cov_{\infty}\right)\leq\frac{\gamma^{2}\sum_{i=1}^N C_{w_i,d_i}\sigma_{i}^{2}+\frac{N-1}{N}\sum_{i=1}^N s_{i}^{2}}{1-\left(\sigma_{\max}(\mathcal{M})\right)^{2}}.\label{bound_simp_trace}
$
Using Lemma~\ref{lem:M_properties} to expand $\left(\sigma_{\max}(\mathcal{M})\right)^{2}$ 
and then using $e_{ss}=\frac{d}{N}Tr(\cov_{\infty})$ completes the proof.\hfill$\qed$

 \bibliographystyle{elsarticle-num} 
 \bibliography{bibliography}
\end{document}